\documentclass[a4paper,11pt]{amsproc}
\usepackage{amssymb}
\usepackage{amsmath}
\usepackage{amsthm,lmodern} 
\usepackage{tikzsymbols}
\usepackage[foot]{amsaddr}

\usepackage{framed, booktabs, float}
\usepackage{xcolor}
\usepackage{enumerate}
\usepackage{mathtools}
\usepackage{url}
\usepackage[T1]{fontenc}
\usepackage{geometry}
\usepackage{tikz}
\usetikzlibrary{3d}
\usetikzlibrary{calc}

\usepackage{comment}

\theoremstyle{plain}
\newtheorem{Theorem} {Theorem} [section]

\newtheorem{Lemma} [Theorem] {Lemma}
\newtheorem{Corollary} [Theorem] {Corollary}

\newtheorem{Question} [Theorem] {Question}

\theoremstyle{definition}

\title{Multicolor vector space Ramsey numbers over the binary field}

\author[Bishnoi]{Anurag Bishnoi$^1$}
\address{$^1$Delft Institute of Applied Mathematics, Delft University of Technology, Netherlands.}
\email{A.Bishnoi@tudelft.nl}

\author[Kucheriya]{Gaurav Kucheriya$^2$}
\address{$^2$Department of Applied Mathematics, Charles University, Czechia.}
\email{gaurav@kam.mff.cuni.cz}
\thanks{Anurag Bishnoi is supported by the Dutch Research Council (NWO)
through the NWO Talent Programme Vidi, project ``Extremal problems in finite geometry''
(project number VI.Vidi.243.039).}
\thanks{Gaurav Kucheriya is supported by GAČR grant 25-17377S and GAUK project 92125.}

\begin{document}

\begin{abstract}
    For every fixed integer $t \geq 2$, we give an upper bound on the multicolor vector space Ramsey number $R_2(t; k)$ that is a tower function of height independent of $k$. 
    For $t \geq 3$, this is the first bound of its form, significantly improving upon the earlier bounds that are towers of height linear in $k$. 
    We achieve this by reducing the problem to a classical hypergraph Ramsey problem via binary simplex codes.
    In particular, we prove that 
    $$R_2(t; k) \leq \left\lceil \log R(K_s^{(r)}; k + 1) \right\rceil \leq \mathrm{twr}_{r-1}(c k\log k),$$ for $r = 2^{t - 1}$ and $s = 2^t - 1$, where $R(K_{s}^{(r)}; k + 1)$ is the classical $(k + 1)$-color Ramsey number for the complete $r$-uniform hypergraph on $s$ vertices. 
    This improvement also translates into an improved lower bound on the chromatic number of the binary projective space with respect to $(t - 1)$-flats.
    For $t = 2$, it recovers the connection with multicolor Ramsey numbers for triangles.
\end{abstract}

\maketitle

\section{Introduction}
We study the $q$-analogs of classical Ramsey numbers, called vector space Ramsey numbers over a finite field $\mathbb{F}_q$. 
In particular, we focus on $q = 2$, where these numbers can be defined as follows. 
Let $\mathbb{F}_2^n$ denote the $n$-dimensional binary vector space, and let $\mathrm{PG}(n - 1, 2)$ denote the corresponding projective space. 
We can identify the points of $\mathrm{PG}(n - 1, 2)$ using the nonzero elements of $\mathbb{F}_2^n$. 
For any integer $t \geq 1$, the $(t - 1)$-dimensional projective subspace of $\mathrm{PG}(n - 1, 2)$ corresponds to the set of nonzero vectors in a $t$-dimensional vector subspace of $\mathbb{F}_2^n$. 

For integers $t, k \geq 1$, the (binary) multicolor vector space Ramsey number $R_2(t; k)$ is the least integer $n$ such that every $k$-coloring of the points of $\mathrm{PG}(n - 1, 2)$ contains a monochromatic $(t-1)$-dimensional projective subspace. 
The existence of these numbers, over arbitrary finite fields, follows from the vector space Ramsey theorem of Graham, Leeb, and Rothschild~\cite{GrahamLeebRothschild}, but we do not get any quantitative bounds from their proof. 
Other early proofs of this result give extremely large quantitative bounds because of the repeated use of the Hales--Jewett theorem~\cite{Shelah,Spencer}.
Over $\mathbb{F}_2$, one can do better by reducing the problem to Taylor's disjoint unions theorem \cite{Taylor}, which gives 
\[R_2(t; k) \leq \mathrm{twr}_{2k(t-1)}(k),\]
where the tower function $\mathrm{twr}$ is defined recursively as $\mathrm{twr}_1(x) = x$ and $\mathrm{twr}_i(x) = 2^{\mathrm{twr}_{i-1}(x)}$ for all $i \geq 2$, with the subscript $i$ denoting the height of the tower.

A recent result of Frederickson and Yepremyan \cite{FredericksonYepremyan} brings the height of this tower function down to $(k - 1)(t - 1) + 1$. 
In particular, for a fixed $t$, the best previously known bounds had tower functions of height linear in $k$. 
In this paper, we show that the dependence on $k$ in tower height can be completely removed. 

\begin{Theorem}
\label{thm:main1}
    For every $t \geq 2$, there exists a constant $c_t > 0$ such that, for every $k \geq 2$
    \[R_2(t; k) \leq \mathrm{twr}_{r - 1}(c_t k \log k),\]
    where $r = 2^{t - 1}$. 
\end{Theorem}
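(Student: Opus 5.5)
The abstract already indicates the route: reduce to a classical hypergraph Ramsey problem using binary simplex codes. Set $r=2^{t-1}$, $s=2^t-1$ and $m=\lceil \log_2 N\rceil$ with $N=R(K_s^{(r)};k+1)$. The plan is to show $R_2(t;k)\le m$ and then bound $N$ by known multicolor hypergraph Ramsey bounds.

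\textbf{The reduction.} Take $n=m$, so that $2^n\ge N$. Index $N$ distinct vectors $v_1,\dots,v_N\in\mathbb{F}_2^n$; in fact I would use all of $\mathbb{F}_2^n$ as the vertex set. Given a $k$-coloring $\chi$ of the nonzero vectors of $\mathbb{F}_2^n$, color each $r$-subset $S$ of the vertex set as follows:
\begin{itemize}
\item give $S$ the color $\chi\bigl(\sum_{v\in S}v\bigr)$ if this sum is nonzero;
\item give $S$ a special $(k+1)$-st color if the sum is zero.
\end{itemize}
By the definition of $N$ there is a set $T$ of $s=2^t-1$ vertices all of whose $r$-subsets receive the same color.

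To extract a subspace, identify $T$ with the nonzero vectors of $\mathbb{F}_2^t$, i.e.\ the points of $\mathrm{PG}(t-1,2)$, via some bijection $\phi$. The complements of hyperplanes in $\mathbb{F}_2^t$ have size $2^{t-1}=r$; these are the supports of the codewords of the simplex code. For $y\in\mathbb{F}_2^t\setminus\{0\}$, let $S_y=\{\phi^{-1}(x): x\cdot y=1\}$, which is an $r$-subset of $T$. Define
\[
f(y)=\sum_{x\,:\,x\cdot y=1} v_{\phi^{-1}(x)}=\sum_x (x\cdot y)\,w_x ,
\]
where $w_x=v_{\phi^{-1}(x)}$. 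The map $f$ is $\mathbb{F}_2$-linear in $y$. Its image is therefore a subspace, of dimension $t$ exactly when $f$ is injective, and its nonzero values are precisely the sums over the $S_y$.

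\textbf{The main obstacle.} The difficulty is guaranteeing that $f$ has trivial kernel, i.e.\ that no $S_y$ sums to zero, and that this holds while the monochromatic color is not the special one. My first attempt is to choose the vertex vectors generically, but the Ramsey theorem chooses $T$ for us, so we cannot control which sums vanish. The special color is exactly what handles this: if the monochromatic color were the $(k+1)$-st one, every $S_y$ would sum to zero. I therefore need a contradiction from all $r$-subsets of an $s$-set summing to zero. Take two $r$-subsets differing in one element: their sums differ by $w_a+w_b$, so all the $w$'s are equal. Since the vectors are distinct and $s\ge 3$, this is impossible. So the common color is one of the $k$ real colors, every $f(y)$ with $y\neq0$ is nonzero, and $f$ is injective. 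The image of $f$ is then a $t$-dimensional subspace whose nonzero vectors all get the same $\chi$-color, giving $R_2(t;k)\le\lceil\log N\rceil$.

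\textbf{Quantitative bound.} Apply the Erdős–Rado stepping-up bound $R(K_s^{(r)};k+1)\le \mathrm{twr}_{r}(c\,k\log k)$, with $c$ depending on $s$ and $r$. This comes from iterating $R^{(r)}\le 2^{\binom{R^{(r-1)}}{r-1}}$ starting at the graph bound $R(K_s;k+1)\le (k+1)^{O(sk)}=2^{O_t(k\log k)}$. Taking $\log$ removes one level of the tower, giving $\mathrm{twr}_{r-1}(c_t k\log k)$. Careful bookkeeping is needed so that the polynomial factors coming from the $\binom{\cdot}{r-1}$ terms are absorbed into the constant $c_t$ at the base of the tower rather than adding height.
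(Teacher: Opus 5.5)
Your proposal is correct and follows essentially the same route as the paper. You use the same $(k+1)$-coloring of the $r$-subsets of $\mathbb{F}_2^n$ by $\chi$ of the sum, with a special color for zero sums, and the same swap-one-element argument to rule out the special color. For the simplex-code step you work directly with the linear map $y \mapsto \sum_x (x\cdot y)\, w_x$ on $\mathbb{F}_2^t$, where the paper applies its isomorphism lemma to the code $C$; this is the same argument. You then finish with the same Erd\H{o}s--Rado-type bound $R(K_s^{(r)};k+1)\le \mathrm{twr}_r(c(k+1)\log(k+1))$.

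One small slip in your sketch of that bound: the Erd\H{o}s--Rado recursion has base $k+1$ rather than $2$, and it lowers $s$ by one. The resulting $\log(k+1)$ factor is absorbed into the constant at the bottom of the tower, as you anticipate, so the conclusion is unaffected.
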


Our new upper bound follows from the relation that we prove between the binary vector space Ramsey numbers and the classical hypergraph Ramsey numbers \cite{MubayiSukSurvey}. 
For an $r$-uniform hypergraph $H$, we denote by $R(H; k)$ the smallest $n$ such that every $k$-coloring of the $r$-element subsets of $[n]$ contains a monochromatic copy of $H$. 
We denote the complete $r$-uniform hypergraph on $s > r$ vertices by $K_s^{(r)}$.
Throughout, $\log$ denotes the base-two logarithm.

\begin{Lemma}
    For every $k, t \geq 2$, 
    \[R_2(t; k) \leq \left\lceil \log R(K_s^{(r)}; k + 1) \right\rceil,\]
    with $s = 2^t - 1$ and $r = 2^{t - 1}$.
\end{Lemma}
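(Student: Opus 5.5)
The plan is to encode $t$-dimensional subspaces of $\mathbb{F}_2^n$ as copies of $K_s^{(r)}$, using the binary simplex code. Let $n = \lceil \log R(K_s^{(r)}; k+1) \rceil$, so $2^n \geq R(K_s^{(r)}; k+1)$, and take as vertex set of the hypergraph all $2^n$ vectors of $\mathbb{F}_2^n$ (zero included). Given a $k$-coloring $\chi$ of the points of $\mathrm{PG}(n-1,2)$, i.e.\ of the nonzero vectors, we define a $(k+1)$-coloring of the $r$-subsets $S \subseteq \mathbb{F}_2^n$. Set $\sigma(S) = \sum_{v \in S} v$. Give $S$ the color $\chi(\sigma(S))$ if $\sigma(S) \neq 0$, and the extra color $k+1$ if $\sigma(S) = 0$. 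By definition of $R(K_s^{(r)};k+1)$ there are distinct vectors $x_1, \dots, x_s$ all of whose $r$-subsets receive the same color.

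The key combinatorial input is the simplex code of dimension $t$ and length $s = 2^t - 1$. Its coordinates are indexed by the nonzero vectors $u \in \mathbb{F}_2^t$, and the codeword of $c \in \mathbb{F}_2^t$ is $(c\cdot u)_u$. Every nonzero codeword has weight exactly $2^{t-1} = r$, so its support $S_c \subseteq [s]$ is an $r$-set. Moreover $c \mapsto \mathbf{1}_{S_c}$ is linear, meaning $S_{c+c'} = S_c \triangle S_{c'}$. Hence the map $\phi\colon \mathbb{F}_2^t \to \mathbb{F}_2^n$ given by $\phi(c) = \sum_{i \in S_c} x_i$ is linear, because sums over symmetric differences add in characteristic $2$.

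First we rule out the color $k+1$. Suppose every $r$-subset of $\{x_1,\dots,x_s\}$ had zero sum. Since $t \geq 2$ we have $s = 2^t - 1 \geq r+1$. So we can pick an $(r-1)$-set $T$ and two further indices $a \neq b$ outside it. Then $\sigma(T \cup \{x_a\}) = \sigma(T \cup \{x_b\}) = 0$ gives $x_a = x_b$, contradicting distinctness.

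So the common color is some $i \leq k$, and every $r$-subset of the $x_j$ has nonzero sum of color $i$. In particular, for each nonzero $c$ the vector $\phi(c) = \sigma(\{x_j : j \in S_c\})$ is nonzero, so $\phi$ is injective. Its image is therefore a $t$-dimensional subspace of $\mathbb{F}_2^n$. Every nonzero vector in this image has the form $\phi(c)$ with $c\neq 0$, which is the sum of an $r$-subset and hence has color $i$. This gives a monochromatic $(t-1)$-dimensional projective subspace, proving $R_2(t;k) \leq n$.

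The only genuinely nontrivial step is recognising that the simplex code makes the $r$-subsets inside one $s$-clique close up into a linear image of $\mathbb{F}_2^t$. The constant-weight property of the simplex code and the handling of the extra zero-sum color are then routine. The step to double-check is the extra color $k+1$: the argument needs $s \geq r+1$, which is exactly why $t \geq 2$ is assumed.
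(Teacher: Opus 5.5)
Your proof is correct and follows the paper's argument. It uses the same coloring of $r$-subsets of $\mathbb{F}_2^n$ by their sum, with an extra color for zero sums, rules out the extra color in the same way using $s \geq r+1$, and uses the constant weight $2^{t-1}$ of the simplex code to produce a monochromatic $t$-dimensional subspace. The differences are cosmetic: you argue directly with $n = \lceil \log R(K_s^{(r)};k+1)\rceil$ rather than via the contrapositive ($\chi_2(t;n)\le k$ implies $R(K_s^{(r)};k+1) > 2^n$), and you check linearity and injectivity of $c \mapsto \sum_{i\in S_c} x_i$ inline instead of citing the isomorphism lemma.
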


For the special case of $t = 2$, this lemma is well-known, as discussed in \cite{BishnoiCamesVanBatenburgRavi}. 
It follows from the classical argument that relates Schur numbers to the multicolor Ramsey number for triangles, when adapted to the abelian group $\mathbb{F}_2^n$ (see \cite{AbbottHanson1972} for a more general statement over arbitrary abelian groups). 
Our main contribution is to generalize that argument to $t > 2$ using multicolor Ramsey numbers of hypergraphs.

The upper bound on $R_2(t; k)$ can equivalently be stated as a lower bound on the chromatic number $\chi_2(t; n)$ \cite{BishnoiCamesVanBatenburgRavi}, which is the smallest number of colors needed to color the points of $\mathrm{PG}(n - 1, 2)$ so that there are no monochromatic $(t - 1)$-flats. 
From this definition it follows that $\chi_2(t; n) > k$ if and only if $R_2(t; k) \leq n$. 
By asymptotically inverting the bound on $R_2(t; k)$ from \cite{FredericksonYepremyan}, the previously best-known lower bound on $\chi_2(t; n)$ is $\Omega(\log^*n)$, the iterated logarithm of $n$. In contrast, by applying Theorem~\ref{thm:main1}, with $k$ of order $\log^{(r - 2)} n/\log^{(r - 1)} n$, we obtain the following significant improvement. 

\begin{Theorem}
    \label{thm:main2}
    For every $t \geq 2$, there is a constant $c'_t > 0$ such that, for all sufficiently large $n$,
    \[
        \chi_2(t; n) \geq c'_t
        \frac{\log^{(r-2)} n}{\log^{(r-1)} n},
    \]
    where $r = 2^{t - 1}$, and where $\log^{(0)} n = n$ and
    $\log^{(i)} n = \log(\log^{(i-1)} n)$ for $i \geq 1$.
\end{Theorem}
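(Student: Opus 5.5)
The plan is to invert Theorem~\ref{thm:main1} and use the equivalence $\chi_2(t;n) > k \iff R_2(t;k) \le n$. Fix $t \ge 2$ and set $r = 2^{t-1}$. It then suffices to exhibit, for all large $n$, an integer $k \ge 2$ with $k \ge c'_t \log^{(r-2)} n / \log^{(r-1)} n$ and $\mathrm{twr}_{r-1}(c_t k \log k) \le n$. Indeed, Theorem~\ref{thm:main1} then gives $R_2(t;k) \le n$, and hence $\chi_2(t;n) > k$.

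\textbf{Reducing the tower to one layer.} For $r \ge 2$, the inequality $\mathrm{twr}_{r-1}(x) \le n$ holds whenever $x \le \log^{(r-2)} n$. This follows by induction on the height. For $r=2$ it is trivial, since $\mathrm{twr}_1(x) = x$. For the inductive step, $\mathrm{twr}_{i}(x) = 2^{\mathrm{twr}_{i-1}(x)} \le n$ as soon as $\mathrm{twr}_{i-1}(x) \le \log n$. Applying the induction hypothesis with $\log n$ in place of $n$ gives $\log^{(i-2)}(\log n) = \log^{(i-1)} n$, which closes the induction. We only need $n$ large enough that all the iterated logarithms involved are positive and well defined.

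\textbf{Choosing $k$.} Write $L = \log^{(r-2)} n$, so that $\log L = \log^{(r-1)} n$, and both tend to infinity with $n$. We need $c_t k \log k \le L$. Take
\[
k = \left\lfloor \frac{L}{2 c_t \log L} \right\rfloor .
\]
For large $n$ we have $k \ge 2$ and $k \le L$, so $\log k \le \log L$. Therefore
\[
c_t k \log k \le c_t \cdot \frac{L}{2c_t \log L}\cdot \log L = \frac{L}{2} \le L.
\]
Moreover $k \ge \frac{L}{2c_t\log L} - 1 \ge \frac{L}{4c_t \log L}$ once $n$ is large. We then get $\chi_2(t;n) > k \ge c'_t \log^{(r-2)} n/\log^{(r-1)} n$ with $c'_t = 1/(4c_t)$. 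For $r = 2$ (the case $t=2$), the same computation reads $L = n$ and gives $n/\log n$, consistent with the statement.

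\textbf{Main obstacle.} There is no real obstacle, only bookkeeping. The point needing care is the direction of the implication: a larger $k$ makes the upper bound on $R_2(t;k)$ larger. We therefore need $\mathrm{twr}_{r-1}(c_t k\log k) \le n$ itself, not merely an asymptotic inversion. We also need $n$ large enough that $L \ge$ some constant depending on $c_t$, so that both $k \ge 2$ and the floor loss are absorbed. Both are handled by the ``sufficiently large $n$'' hypothesis.
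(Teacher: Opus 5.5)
Your proposal is correct and follows the same route as the paper, which simply inverts Theorem~\ref{thm:main1} through the equivalence $\chi_2(t;n) > k \iff R_2(t;k) \le n$ with $k$ of order $\log^{(r-2)} n/\log^{(r-1)} n$. You also supply the bookkeeping the paper leaves implicit, all of which checks out: the reduction showing $\mathrm{twr}_{r-1}(x) \le n$ whenever $x \le \log^{(r-2)} n$, the explicit choice $k = \lfloor L/(2c_t \log L) \rfloor$ guaranteeing $k \ge 2$ and $c_t k \log k \le L$, and absorbing the loss from the floor.
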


In Section~\ref{sec2}, we give some 
coding-theoretic background that will be used in the proof. 
In Section~\ref{sec3}, we prove our main result. 
We conclude with some future directions and open problems in Section~\ref{sec4}.

\section{Background and a useful lemma} \label{sec2}

Let $q$ be a prime power. The \emph{$q$-ary simplex code of dimension $t$}, denoted by $\mathcal{S}_{q,t}$, is the linear code over $\mathbb{F}_q$ whose generator matrix has as columns one representative from each one-dimensional subspace of $\mathbb{F}_q^t$. 
Equivalently, the columns are indexed by the points of the projective space $\mathrm{PG}(t-1,q)$. Since \begin{equation*} 
|\mathrm{PG}(t-1,q)|=\frac{q^t-1}{q-1}, 
\end{equation*} the code $\mathcal{S}_{q,t}$ has length $(q^t-1)/(q-1)$ and dimension $t$. 
This construction and the following standard constant-weight property
may be found in \cite[Theorem~2.7.5, p.~82]{HuffmanPless}.

\begin{Lemma} 
\label{lem:simplex}
    The $q$-ary simplex code $\mathcal{S}_{q,t}$ has parameters 
    \begin{equation*} \left[\frac{q^t-1}{q-1},\,t,\,q^{t-1}\right]_q. \end{equation*} 
    Moreover, every nonzero codeword of $\mathcal{S}_{q,t}$ has Hamming weight $q^{t-1}$. 
\end{Lemma}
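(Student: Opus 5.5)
The plan is to establish the three assertions in order: length, dimension, and then minimum distance together with the constant-weight property. The distance claim is an immediate consequence of the constant-weight claim.

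\emph{Length.} By construction, the generator matrix $G$ is a $t \times N$ matrix over $\mathbb{F}_q$ with $N = (q^t-1)/(q-1)$. Its columns are fixed representatives $v_1,\dots,v_N$ of the distinct one-dimensional subspaces of $\mathbb{F}_q^t$, i.e.\ of the points of $\mathrm{PG}(t-1,q)$. The count of such subspaces is standard: each of the $q^t-1$ nonzero vectors lies in exactly one of them, and each contains exactly $q-1$ nonzero vectors.

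\emph{Dimension.} We show that $G$ has rank $t$. Its columns include representatives of the $t$ coordinate points $\langle e_1\rangle,\dots,\langle e_t\rangle$, which are nonzero scalar multiples of $e_1,\dots,e_t$, so the columns span $\mathbb{F}_q^t$. Hence the encoding map $x \mapsto xG$ from $\mathbb{F}_q^t$ (row vectors) is injective, and the code has exactly $q^t$ codewords, so dimension $t$.

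\emph{Constant weight.} This is the key step. Fix a nonzero message $x \in \mathbb{F}_q^t$. The codeword $xG$ has $i$-th coordinate $x\cdot v_i$. Thus the weight of $xG$ is $N$ minus the number of points $\langle v_i\rangle$ lying in the hyperplane $H_x = \{v : x\cdot v = 0\}$. Whether $x \cdot v_i = 0$ does not depend on the chosen representative, since scaling $v_i$ by a nonzero scalar scales $x\cdot v_i$ by the same scalar. The hyperplane $H_x$ is a $(t-1)$-dimensional subspace because $x \neq 0$, so it contains $(q^{t-1}-1)/(q-1)$ projective points. Hence
\[
\mathrm{wt}(xG) = \frac{q^t-1}{q-1} - \frac{q^{t-1}-1}{q-1} = \frac{q^t-q^{t-1}}{q-1} = q^{t-1}.
\]
Since $x\mapsto xG$ is a bijection onto the code, every nonzero codeword has weight exactly $q^{t-1}$. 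For a linear code the minimum distance equals the minimum nonzero weight, so $d = q^{t-1}$.

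There is no serious obstacle here. The only point needing care is the counting of projective points in the hyperplane $H_x$, together with the observation that membership in $H_x$ is well defined on projective points. Both follow directly from the description of one-dimensional subspaces given above the lemma.
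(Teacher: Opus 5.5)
Your proof is correct and complete. The paper does not prove this lemma itself; it only cites Huffman--Pless for the construction and the constant-weight property, so there is no in-paper argument to compare with. What you give is the standard self-contained geometric argument. A nonzero message $x$ defines a nonzero linear functional whose kernel is a hyperplane. That hyperplane contains exactly $(q^{t-1}-1)/(q-1)$ of the $(q^t-1)/(q-1)$ projective points indexing the coordinates, and every other coordinate is nonzero, so the weight is $q^{t-1}$.

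You also handle the one point that needs care: whether $x\cdot v_i$ vanishes does not depend on the chosen representative $v_i$.

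One small economy is available. Your separate dimension step is already implied by the weight computation. Every nonzero $x$ gives a codeword of weight $q^{t-1}>0$, so $x\mapsto xG$ is injective without using the coordinate points $\langle e_1\rangle,\dots,\langle e_t\rangle$.

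Compared with the citation, your route costs a few lines but makes the paper self-contained, and it works uniformly for every $q$.
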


We note a simple linear algebraic lemma which will be combined with the simplex codes to prove our main result. 

\begin{Lemma}\label{lem:isomorphism}
    Let $C \leq \mathbb{F}_q^m$ be a $t$-dimensional vector subspace. 
    Let $x_1, \dots, x_m$ be vectors in $\mathbb{F}_q^n$. 
    Define
    \[
        S \coloneqq \left\{\sum_{i \in [m]} c_i x_i :
        (c_1, \dots, c_m) \in C\right\}.
    \]
    If, for every $(c_1,\dots,c_m)\in C$,
    \[
        \sum_{i \in [m]} c_i x_i = \vec{0}
        \implies c_i = 0 \text{ for all } i,
    \]
    then $S \cong C$.
\end{Lemma}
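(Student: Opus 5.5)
The plan is to exhibit an explicit linear map from $C$ onto $S$ and show it is a bijection. I will define
\[
  \phi \colon C \to \mathbb{F}_q^n, \qquad \phi(c_1,\dots,c_m) \coloneqq \sum_{i\in[m]} c_i x_i .
\]
This map is $\mathbb{F}_q$-linear, since it is the restriction to $C$ of the linear map $\mathbb{F}_q^m \to \mathbb{F}_q^n$ sending the $i$-th standard basis vector to $x_i$. By the definition of $S$, its image is exactly $S$. In particular, $S$ is a linear subspace of $\mathbb{F}_q^n$, because it is the image of a subspace under a linear map. This point is worth recording, since the statement implicitly treats $S$ as a vector space.

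Next I will show that $\phi$ is injective by computing its kernel. Suppose $c=(c_1,\dots,c_m)\in C$ satisfies $\phi(c)=\vec{0}$. The hypothesis then gives $c_i=0$ for every $i$, so $c=\vec 0$, and hence $\ker\phi=\{\vec 0\}$. It follows that $\phi$ is a linear bijection from $C$ onto $S$, so $S\cong C$; in particular, $\dim S = t$.

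There is no real obstacle here. The only care needed is to notice that the hypothesis is exactly triviality of the kernel of $\phi$ restricted to $C$, and not linear independence of the $x_i$ themselves; the latter would be a much stronger condition and is not assumed. I expect the isomorphism to be used in the following way. When $C$ is the binary simplex code, every nonzero codeword has weight $2^{t-1}$ by Lemma~\ref{lem:simplex}. Each nonzero element of $S$ is then a sum of exactly $r=2^{t-1}$ of the $x_i$, and $S\setminus\{\vec 0\}$ is a $(t-1)$-flat.
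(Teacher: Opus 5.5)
Your proposal is correct and matches the paper's proof. Both define $\phi(c)=\sum_{i\in[m]} c_i x_i$ on $C$, check it is linear, read the hypothesis as $\ker\phi=\{\vec 0\}$, and get surjectivity onto $S$ by definition; your explicit remark that $S$ is a subspace, being the image of a linear map, is a small point the paper leaves implicit.
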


\begin{proof}
    Consider the map $\phi : C \to S$ defined by
    \[
        \phi(c_1,\ldots,c_m)
        \coloneqq \sum_{i=1}^{m} c_i x_i.
    \]
    Note that $\phi$ is linear since $\phi(\vec{0})=\vec{0}$ and
    \[
        \phi(c+\lambda c')
        = \sum_{i=1}^{m} (c_i+\lambda c'_i)x_i
        = \sum_{i=1}^{m} c_i x_i
        + \lambda\sum_{i=1}^{m} c'_i x_i
        = \phi(c)+\lambda\phi(c'),
    \]
    for $c=(c_1,\dots,c_m),c'=(c'_1,\dots,c'_m)\in C$ and
    $\lambda\in\mathbb{F}_q$.

    Since  $\ker \phi = \{\vec{0}\}$ by assumption,
    $\phi$ is injective. 
    Moreover, $\phi$ is surjective
    by the definition of $S$. Therefore, $\phi$ is a vector space isomorphism
    and hence $S \cong C$.
\end{proof}

\section{Main result}\label{sec3}
Our proof idea for Theorem~\ref{thm:main1} is to turn a coloring of $\mathrm{PG}(n - 1,2)$ that avoids monochromatic $(t - 1)$-flats to a coloring of the complete $r$-uniform hypergraph on $\mathbb{F}_2^n$ that avoids monochromatic $K_s^{(r)}$, for carefully chosen $r$ and $s$. 
For each $r$-element subset $\{x_1, \dots, x_r\}$ we consider the vector $x_1 + \dots + x_r$.
If this vector is nonzero, then it corresponds to a projective point and we give it a color according to the coloring of $\mathrm{PG}(n - 1, 2)$.
If it is the zero vector, then we give it a special new color. 
We then prove the absence of a monochromatic $K_s^{(r)}$ by using Lemmas~\ref{lem:simplex} and \ref{lem:isomorphism} for $q = 2$.

\begin{Theorem}
    If $\chi_2(t; n) \leq k$, then $R(K_s^{(r)}; k + 1) > 2^n$, where $r = 2^{t - 1}$ and $s = 2^t - 1$.
\end{Theorem}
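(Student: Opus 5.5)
We prove the contrapositive-free form directly: starting from a coloring $\chi$ of the points of $\mathrm{PG}(n-1,2)$ with $k$ colors and no monochromatic $(t-1)$-flat, we build a $(k+1)$-coloring of the $r$-subsets of the vertex set $V = \mathbb{F}_2^n$ (which has $2^n$ elements) with no monochromatic $K_s^{(r)}$. This shows $R(K_s^{(r)}; k+1) > 2^n$. Following the outline above, an $r$-set $\{x_1,\dots,x_r\}\subseteq V$ gets color $\chi(x_1+\dots+x_r)$ if the sum is nonzero, and gets a new color $0$ otherwise.

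Suppose $T=\{y_1,\dots,y_s\}$ spans a monochromatic $K_s^{(r)}$, with $s=2^t-1$. We index the elements of $T$ by the points of $\mathrm{PG}(t-1,2)$, i.e.\ by the coordinate positions of the binary simplex code $\mathcal{S}_{2,t}$. By Lemma~\ref{lem:simplex}, every nonzero codeword $c\in\mathcal{S}_{2,t}$ has weight exactly $r=2^{t-1}$. Its support is therefore an $r$-subset of $T$, and $\sum_i c_i y_i$ is the sum of that $r$-set. Set
\[
S=\Bigl\{\sum_i c_i y_i : c\in \mathcal{S}_{2,t}\Bigr\}.
\]

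Case 1: the common color is $0$. Then we need an $r$-subset of $T$ with nonzero sum. Take two distinct $(r-1)$-subsets $A, A'$ of $T$ with $|A \cap A'| = r-2$, so that $A=B\cup\{u\}$ and $A'=B\cup\{u'\}$. Pick any $w \in T\setminus(A\cup A')$, which exists since $s \ge r+1$. Then $\sum(A\cup\{w\})=0=\sum(A'\cup\{w\})$ forces $u = u'$, contradicting that the elements of $T$ are distinct vectors. So color $0$ cannot be the common color.

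Case 2: the common color is some $j\in[k]$. Then every nonzero codeword of $\mathcal{S}_{2,t}$ gives an $r$-subset of $T$ with nonzero sum of color $j$, so the hypothesis of Lemma~\ref{lem:isomorphism} holds with $C=\mathcal{S}_{2,t}$ and $x_i=y_i$. By that lemma $S$ is a $t$-dimensional subspace of $\mathbb{F}_2^n$. Each nonzero vector of $S$ is $\sum_i c_i y_i$ for some nonzero codeword $c$, so all points of the corresponding $(t-1)$-flat have color $j$. This contradicts the choice of $\chi$.

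The main obstacle is making sure Case 1 really is impossible; the rest is bookkeeping with the two lemmas. The argument above uses only $s\ge r+1$ and the distinctness of the vertices, and it needs $r\ge 2$, i.e.\ $t\ge 2$, to have room to pick $A\ne A'$.
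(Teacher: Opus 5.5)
Your proposal is correct and follows essentially the same route as the paper. It uses the same $(k+1)$-coloring of $r$-subsets by their sums, with a special color for zero sums. The special color is ruled out by the same argument with two $r$-sets sharing $r-1$ elements, and otherwise the simplex code together with Lemma~\ref{lem:isomorphism} produces a monochromatic $(t-1)$-flat.
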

\begin{proof}
    Let $\chi$ be a proper $k$-coloring of $\mathrm{PG}(n - 1, 2)$, that is, there are no monochromatic $(t - 1)$-flats under $\chi$. 
    Let $r = 2^{t - 1}$ and $s = 2^t - 1$.
    Define a coloring $c$ on the $r$-element subsets of $V = \mathbb{F}_2^n$ by 
    \begin{equation*}
        c(\{x_1, \dots, x_r\}) = 
        \begin{cases}
            \star, & \text{if } x_1 + \cdots + x_r = 0, \\
            \chi(x_1+ \dots+ x_r), & \text{otherwise.}
        \end{cases}
    \end{equation*}
    We will show that there is no monochromatic $K_s^{(r)}$ in this coloring of the complete $r$-uniform hypergraph on $V$, which proves that $R(K_s^{(r)}; k+1) > 2^n.$
   
    Let $x_1, \dots, x_s$ be a set of distinct vectors in $\mathbb{F}_2^n$.
    First, assume that they induce a monochromatic $K_s^{(r)}$ in color $\star$, that is, 
    $\sum_{i \in T} x_i = 0$ for all subsets $T$ of $[s]$ with $|T| = r$. 
    Since $r+1 \leq s$, we can take $T_1 = \{1, \dots, r\}$ and $T_2 = \{1, \dots, r - 1, r+1\}$ as two such subsets. 
    Then we get $\sum_{i \in T_1} x_i = \sum_{i \in T_2} x_i = 0$, which implies $x_r = x_{r + 1}$, a contradiction to the fact that we have distinct vectors. 

    Now assume that these vectors give a monochromatic $K_s^{(r)}$ in one of the $k$ colors of $\chi$. 
    In particular, $\sum_{i \in T} x_i \neq 0$ for all $T \subseteq [s]$ of size $r$. 
     
    Let $C$ be the binary simplex code $\mathcal{S}_{2,t}$.
    By Lemma~\ref{lem:simplex}, it has parameters $[2^{t} - 1, t, 2^{t-1}]_2$ and every nonzero codeword in $C$ has Hamming weight $2^{t - 1}$.
    Define $S = \left\{\sum_{i \in [s]}c_i x_i : (c_1, \dots, c_s) \in C \setminus \{\vec{0}\}\right\}$ and $\overline{S} = S \cup \{\vec{0}\}$. 
    The Hamming weight of each nonzero codeword in $C$ is $r = 2^{t - 1}$, and the sum of any $r$ vectors among $x_1, \dots, x_s$ is nonzero.
    By Lemma~\ref{lem:isomorphism}, for $q = 2$, we get that $\overline{S}$ is a $t$-dimensional vector subspace of $\mathbb{F}_2^n$, that is $S$ corresponds to a copy of $\mathrm{PG}(t - 1, 2)$ in $\mathrm{PG}(n - 1, 2)$.
    All elements of $S$ receive the same color under $\chi$, which is a contradiction to the fact that $\chi$ is a proper coloring. 
\end{proof}
We now prove Theorem~\ref{thm:main1}.
\begin{Corollary}
    $R_2(t; k) \leq \left\lceil \log R(K_s^{(r)}; k + 1) \right\rceil \leq \mathrm{twr}_{r-1}(c_t k\log k)$,  for $r = 2^{t - 1}$, $s = 2^t - 1$, and some constant $c_t$.
\end{Corollary}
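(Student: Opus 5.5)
The corollary has two inequalities, and I will prove them separately.

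\emph{First inequality.} I will deduce it from the preceding theorem by contraposition. Set $N = R(K_s^{(r)}; k+1)$ and $n = \lceil \log N \rceil$, so that $2^n \geq N$. Suppose $R_2(t;k) > n$. Then, by definition, some $k$-coloring of $\mathrm{PG}(n-1,2)$ has no monochromatic $(t-1)$-flat, i.e.\ $\chi_2(t;n) \leq k$. The theorem then gives $R(K_s^{(r)};k+1) > 2^n \geq N$, which is a contradiction. Hence $R_2(t;k) \leq n$.

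\emph{Second inequality.} This is a purely classical hypergraph Ramsey estimate. I will cite the standard Erd\H{o}s--Rado stepping-up upper bound for multicolor hypergraph Ramsey numbers (see \cite{MubayiSukSurvey}). It has two parts.

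\begin{enumerate}
\item \textbf{Base case, graphs.} The multicolor graph bound $R(K_s; \ell) \leq \ell^{\ell s} = 2^{\ell s \log \ell}$.
\item \textbf{Recursion.} For $r \geq 3$,
\[ R(K_s^{(r)}; \ell) \leq \ell^{\binom{R(K_{s-1}^{(r-1)};\ell)}{r-1}} \leq 2^{\,c\, R(K_{s-1}^{(r-1)};\ell)^{r-1}\log \ell}. \]
The proof greedily builds a sequence of vertices whose $r$-sets are colored by the color of their first $r-1$ vertices, and then applies the $(r-1)$-uniform Ramsey bound.
\end{enumerate}

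Iterating this recursion $r-2$ times from the graph case gives
\[ R(K_s^{(r)};\ell) \leq \mathrm{twr}_{r}(c'_t \ell \log \ell), \]
with $\ell = k+1$. The constant absorbs the powers $r-1$ and the $\log \ell$ factors at each level, using $2^{a x}\log\ell \le 2^{(a+1)x}$ for the relevant ranges.

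Finally, taking $\lceil \log \cdot \rceil$ removes one level of the tower:
\[ \lceil \log R \rceil \leq \mathrm{twr}_{r-1}(c'_t \ell\log \ell)+1 \leq \mathrm{twr}_{r-1}(c_t k\log k) \quad \text{for } k\ge2, \]
after adjusting $c_t$ to handle $k+1$ versus $k$. The case $t=2$, i.e.\ $r=2$, is just the graph bound $R(K_3;k+1)\le 2^{O(k\log k)}$, so the logarithm is $O(k\log k)=\mathrm{twr}_1(c_tk\log k)$.

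The main obstacle is bookkeeping. The extra polynomial and logarithmic factors at each level of the recursion must be absorbed into a single constant $c_t$ sitting at the top argument of the tower, without increasing its height. I would handle this by induction on $r$, with the hypothesis $R(K_{s'}^{(r')};\ell)\le \mathrm{twr}_{r'}(C_{r',s'}\ell\log\ell)$. The inductive step uses the inequality $(\mathrm{twr}_{r'}(x))^{r'}\log\ell \le \mathrm{twr}_{r'}(C x)$ for a suitable constant $C$ and $x\ge \log \ell$.
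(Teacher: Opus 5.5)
Your proof is correct and matches the paper's. The first inequality is the same deduction from the preceding theorem (you argue by contraposition with $n=\lceil \log N\rceil$, the paper sets $n=R_2(t;k)-1$), and for the second you re-derive, via the Erd\H{o}s--Rado recursion and induction on the uniformity, the bound $R(K_s^{(r)};k+1)\le \mathrm{twr}_r(c(k+1)\log(k+1))$ that the paper simply quotes from Axenovich--Gy\'arf\'as--Liu--Mubayi before taking logarithms (minor naming point: ``stepping-up'' is the Erd\H{o}s--Hajnal lower-bound technique, whereas the upper bound you use is just the Erd\H{o}s--Rado end-homogeneous-sequence argument).
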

\begin{proof}
    Let $n = R_2(t; k) - 1$.
    Then there exists a $k$-coloring of $\mathrm{PG}(n - 1, 2)$ without monochromatic $(t - 1)$-flats, that is, $\chi_2(t; n) \leq k$.
    Therefore, $2^n < R(K_s^{(r)}; k + 1)$, which implies 
    $R_2(t; k) = n + 1 < 1 + \log R(K_s^{(r)}; k + 1)$.

    From the classical upper bound of Erd\H{o}s and Rado
\cite[Theorem~1, pp.~420--423]{ErdosRado1952}, one can deduce the stated upper bound on $R(K_s^{(r)}; k + 1)$. 
In particular, from \cite[Theorem~1]{AxenovichGyarfasLiuMubayi2014} we see that 
for every fixed $s>r\geq 2$, there is a constant
$c=c(r,s)>0$ such that $R(K_s^{(r)}; k + 1) \leq \mathrm{twr}_r(c (k + 1) \log(k + 1))$. 
By taking $\log$ and adjusting the constant we get the stated bound. 
\end{proof}

\section{Conclusion}
\label{sec4}
To improve our bound on $R_2(t; k)$ one could try using a better code than the simplex code. 
However, we do need constant weight codes so that we have a uniform hypergraph. 
The simplex code achieves the minimum possible nonzero weight for a $t$-dimensional $q$-ary linear code of constant weight.

Another avenue for improvement is to bound $R_2(t; k)$ using the Ramsey number of the hypergraph whose edges are supported on the simplex code, along with one extra edge to take care of the special color. 
This is a much sparser hypergraph than the complete hypergraph, but we were unable to find better upper bounds.
It is an interesting open problem to explore the multicolor Ramsey number of these simplex hypergraphs. 

\begin{Question}
    Let $\mathcal{F}_t$ denote the $2^{t - 1}$-uniform hypergraph on $2^t - 1$ vertices whose edges are supported on the binary simplex code. 
    Determine the asymptotic growth of $R(\mathcal{F}_t; k)$ for all $t \geq 3$.
    In particular, does it have tower-type growth, and, if so, what is its tower height?
\end{Question}

Another natural question to ask is whether this argument generalizes to non-binary multicolor vector space Ramsey numbers. 
The coding-theoretic idea remains intact, but we were unable to translate the problem into a classical hypergraph Ramsey number for $q > 2$. 
We might need a new version of Ramsey numbers for hypergraphs whose edges are weighted by elements of $\mathbb{F}_q^\times$. 
We believe that our approach could yield upper bounds for all $q$.
Our arguments could also be useful for the more general vector space Ramsey numbers, where there has been tremendous progress in recent years \cite{FredericksonYepremyan, HunterPohoata}.

For $q>3$, the known general upper bounds remain enormous \cite{Shelah, Spencer}.
Perhaps $q = 3$ is a good target case since the current best upper bound on the multicolor case \cite[Theorem 1.3]{FredericksonYepremyan} is also a tower of height $(k - 1)(t - 1) + 1$. 
\begin{Question}
    Is $R_3(t; k) \leq \mathrm{twr}_r(k)$ for some constant $r = r(t)$?
\end{Question}

For general $q$ and $t$, the current best lower bound \cite{BishnoiCamesVanBatenburgRavi} is $R_q(t; k) \geq tk - O(1)$. 
Thus, there is much room for improvement.

\end{document}